\theoremstyle{plain}
\newtheorem{thm}[subsection]{Theorem}
\newtheorem{lem}[subsection]{Lemma}
\newcommand{\card}[1]{\ensuremath{\left |#1\right |}}
\theoremstyle{definition}
\newtheorem{defn}[subsection]{Definition}
\def\v{\vee}
\def\w{\wedge}
\def\L{\cal{L}}
\def\D{\cal{D}}
\def\e{\varepsilon}
\def\a{\alpha}
\def\b{\beta}
\def\l{\lambda}
\def\d{\delta}
\def\g{\gamma}
\def\v{\vee}
\def\w{\wedge}
\def\L{\cal{L}}
\def\e{\varepsilon}
\def\a{\alpha}
\def\b{\beta}
\def\l{\lambda}
\def\d{\delta}
\def\g{\gamma}
\begin{document}

\title{On singularities of lattice varieties }
\author{
        Himadri Mukherjee\\
                Department of Mathematics and Statistics\\
        Indian Institute of Science Education and Research, Kolkata\\
        himadri@iiserkol.ac.in
       }

\date{\today}

\maketitle

\begin{abstract}
 Toric varieties associated with distributive lattices arise as a fibre of a flat degeneration of a Schubert variety in a minuscule. The singular locus of these varieties has been studied by various authors. In this article we prove that the number of diamonds incident on a lattice point $\a$ in a product of chain lattices is more than or equal to the codimension of the lattice. Using this we also show that the lattice varieties associated with product of chain lattices is smooth.
\end{abstract}

\section{Introduction}

\label{introduction}
The toric varieties associate to distributive lattices are studied by various authors for the last few decades. In \cite{Hibi} Hibi shows that the $k$-algebra $k[\L]$ associated to a lattice $\L$ is an integral domain if and only if the lattice $\L$ is a distributive lattice. Furthermore using a standard monomial basis it was showed that the $k$-algebra is normal. Therefore the toric variety associated to the binomial ideal $I(\mathcal{L})= <x_\a x_\b - x_{\a \v \b} x_{\a \w \b} | \a \, , \b \in \L >$ related to a distributive lattice $\L$ is a normal toric variety. In \cite{g-l} Lakshmibai and Gonciulea shows that the cone over the Schubert variety $X(\omega)$ i.e $\widehat{X(\omega)}$ associated to a $\omega \in \mathrm{I}_{(d,n)}$ degenerates to the lattice variety $X(\L_\omega)$ for the Bruhat poset $\L_\omega$. They also find the orbit decomposition of these lattice varieties and propose conjectures related to their singularities \cite{GLnext}. Wagner in \cite{wagner} finds the singular locus of these varieties depending on conditions on {\em contractions} of the poset of join irreducibles $J$. In \cite{HL} the singular locus of these varieties were revisited to find a standard monomial basis for the co-tangent space of the variety associated to these lattices. The singular locus and the multiplicities of the varieties associated to Bruhat lattices were discussed in \cite{b-l}\cite{GH}. In the same article interesting formulas for the multiplicities of these varieties at the distinguished points of the $T$ orbits were found. The authors in \cite{GH} also propose conjectures regarding the singularities of these for the general $I_{(d,n)}$ \cite{GH}.
In \cite{HL} the notion of a $\tau$ diagonal is introduced. The $\tau$-diagonals are particular class of diamonds that are incident on an embedded sublattice $D_\tau$ at only one point. In the present article we simplify that concept and introduce the set of diamonds $E_\a$. Based on a lower bound on the size of this set we hope to find the singular locus of the space $X(\L)$. As a combinatorial object $E_\a$ is interesting in it's own rights as counting the number of sublattices of a given distributive lattice is long standing complicated problem \cite{Dedikind} with various degrees of generalizations.

This article investigates the distributive lattice varieties that possess a tree as its poset of join-irreducibles( see for definition \cite{GG}). We provide a necessary and sufficient criterion for a distributive lattice to be a tree lattice. We also give an example for a tree lattice for which the singular locus of $X(\L)$ is non-empty. We define a particular type of tree lattice that is called a square lattice for which that set of join-irreducibles are union of chains except at the root. We show that these lattices are product lattices of chain lattices. We give an interesting tight bound on the cardinality of the number of diamonds containing a given element $\a$. We give a different proof of the fact that the singular locus of the varieties associated to square lattices is empty using elementary combinatorial arguments and the bound described before. We show that the affine cone $\widehat{X(\L)}$ over the variety $\widehat{X(\L)}$ has no singular points except at the vertex of the cone.
%%%%%%%%%%%%%%%%%%%%%%%%%%%%%%%%%%%%%%%%%%%%%%%%%%%%%%%%%%%%%%%%%%%%%%%%%%%%%%%%%%%%%%%%%%%%%%%%%%%%%%%%%%%%%%%%%%%%%%%%%%%%%%%%%%%%%%%%%%%%%%%%%%%%%%%%

%%%%%%%%%%%%%%%%%%%%%%%%%%%%%%%%%%%%%%%%%%%%%%%%%%%%%%%%%%%%%%%%%%%%%%%%%%%%%%%%%%%%%%%%%%%%%%%%%%%%%%%%%%%%%%%%%%%%%%%%%%%%%%%%%%%%%%%%%%%%%%%%%%%
\section{Main Results}

\begin{thm}\label{a} Let $\L$ be a distributive lattice and $\a \in \L$. $Y(\mathcal{L})=V(I(\mathcal{L})) \subset \mathbb{A}^{|\L|}$ and let point $p_{\a}$ defined as \[ (p_{\a})_{\b} = \left\{ \begin{array}{ll}
         0 & \mbox{if $\b \neq \a $};\\
        c \neq 0 & \mbox{otherwise}.\end{array} \right. \] Where $(p_{\a})_{\b}$ denotes the $\b$th coordinate of the point $p_{\a}$. The point $p_\a$ is smooth if \[|E_{\a}| \geq |\mathcal{L}| -|J(\mathcal{L})|\]
\end{thm}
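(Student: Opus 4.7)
The plan is to apply the Jacobian criterion at the point $p_\a$. First I check that $p_\a \in Y(\L)$: every generator $f_{\b,\g} = x_\b x_\g - x_{\b \v \g} x_{\b \w \g}$ of $I(\L)$ is a difference of two degree-two monomials in distinct variables, and $p_\a$ has only the coordinate $x_\a$ nonzero, so each monomial vanishes at $p_\a$. It therefore suffices to show that the Jacobian of the generators of $I(\L)$ at $p_\a$ has rank at least $|\L|-|J(\L)|$, the codimension of $Y(\L)$ in $\mathbb{A}^{|\L|}$.

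Next, I analyse the sparsity pattern of the Jacobian. The nonzero partial derivatives of $f_{\b,\g}$ are
\[
\frac{\partial f_{\b,\g}}{\partial x_\b} = x_\g, \quad \frac{\partial f_{\b,\g}}{\partial x_\g} = x_\b, \quad \frac{\partial f_{\b,\g}}{\partial x_{\b \v \g}} = -x_{\b \w \g}, \quad \frac{\partial f_{\b,\g}}{\partial x_{\b \w \g}} = -x_{\b \v \g},
\]
so after substituting $p_\a$ the entry $\partial f_{\b,\g}/\partial x_\d$ is nonzero exactly when $\d$ is a vertex of the diamond $D = \{\b,\g,\b \v \g,\b \w \g\}$ and its partner in the binomial equals $\a$. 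Consequently the row indexed by $f_{\b,\g}$ is identically zero unless $D \in E_\a$, and for each $D \in E_\a$ the row contains precisely one nonzero entry (equal to $\pm c$) located in the column indexed by the partner $\pi(D) \in \L \setminus \{\a\}$ of $\a$ inside $D$. Since every nonzero row is then a scalar multiple of the standard basis vector $e_{\pi(D)}$, the rank of the Jacobian at $p_\a$ equals $|\pi(E_\a)|$.

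It remains to show $|\pi(E_\a)| \geq |\L|-|J(\L)|$. I would split $E_\a$ according to whether $\a$ appears in the diamond as a side, the top, or the bottom. The three resulting partner sets land respectively in the elements incomparable to $\a$, strictly below $\a$, and strictly above $\a$, so they contribute disjointly to $\pi(E_\a)$. On the side part $\pi$ is injective, since a diamond is uniquely determined by its pair of sides. The main obstacle is the top and bottom parts, where multiple complementary pairs inside the same subinterval $[\b \w \g,\b \v \g]$ produce distinct diamonds sharing the same partner, so $\pi$ can have nontrivial fibres there. Distributivity of $\L$ forces the complemented elements of any such subinterval to form a Boolean sublattice, and I expect that this structural constraint, combined with the hypothesis $|E_\a| \geq |\L|-|J(\L)|$, is precisely what is needed to show that the total image $\pi(E_\a)$ still has size at least $|\L|-|J(\L)|$. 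Once this bound is in hand, the rank of the Jacobian matches the codimension, $\dim T_{p_\a} Y(\L) \leq \dim Y(\L)$, and smoothness of $p_\a$ follows.
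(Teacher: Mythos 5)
Your reduction to the Jacobian criterion and your analysis of the matrix are correct, and in fact more careful than the paper's: you rightly observe that a generator whose diamond $D$ contains $\alpha$ contributes a row with a single nonzero entry $\pm c$, sitting in the column of the \emph{partner} $\pi(D)$ of $\alpha$ (the variable multiplying $x_\alpha$ in the binomial), so that the rank at $p_\alpha$ is exactly the number of distinct partners. The gap is that you never prove the one inequality your argument actually needs, namely $|\pi(E_\alpha)|\ge |\mathcal{L}|-|J(\mathcal{L})|$; you only record the hypothesis $|E_\alpha|\ge|\mathcal{L}|-|J(\mathcal{L})|$ together with the hope that distributivity bridges the two. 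This is a genuine gap, not a formality: $E_\alpha$ as defined collects \emph{all} pairs $(\alpha,\gamma)$ with $\gamma$ in a common diamond with $\alpha$, so a single diamond already contributes three elements to $E_\alpha$ while contributing only one partner. Already in the four-element Boolean lattice with $\alpha$ an atom, $|E_\alpha|=3$ while $|\pi(E_\alpha)|=1$, so no cheap inequality relating the two sizes is available, and nothing you wrote rules out the top/bottom fibres of $\pi$ absorbing the entire hypothesis. The Boolean-sublattice remark is a reasonable lead but it is not an argument.

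For what it is worth, the paper's own proof stops exactly where you stopped: it chooses one diamond $D_i$ for each pair in $E_\alpha$, writes $f_i=x_\alpha x_{\beta_i}-x_{\delta_i}x_{\gamma_i}$, and then asserts that the Jacobian contains a block $cI_{r\times r}$ with $r=|E_\alpha|$ --- which is precisely the unproved claim that the partners $\beta_i$ are pairwise distinct, i.e.\ that $\pi$ restricted to the chosen diamonds is a bijection onto a set of size $|E_\alpha|$. In the paper's actual applications (Lemmas \ref{inequality} and \ref{greater}) every element counted into $E_\alpha$ is exhibited as the partner of $\alpha$ in an explicitly constructed diamond, so the downstream results survive if one reads $E_\alpha$ as the set of partners of $\alpha$; but as a self-contained statement the theorem requires either that reading or the counting step you left open. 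To repair your proof, either reinterpret $E_\alpha$ as $\{\gamma : \gamma \text{ is the partner of } \alpha \text{ in some diamond}\}$ (which is what the rank measures and what the later lemmas in fact produce), or supply a proof that $|\pi(E_\alpha)|\ge|\mathcal{L}|-|J(\mathcal{L})|$ follows from the stated hypothesis.
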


The above is a direct application of Jacobian criterion of smoothness for affine varieties. We use the inequality to prove that the restriction of the Jacobian matrix at the point $p_{\a}$ has rank more than or equal to $\card{E_\a}$ and therefore it is smooth at the points where $\card{E_\a}$ is more than the codimension of the variety in the full affine space at that point. And hence smooth by Jacobian criterion. See \cite{eis}.

\begin{thm}\label{b} For $\a \in \L$ where $L$ is a square lattice we have $|E_\a| \geq |\mathcal{L}| - |J| $
\end{thm}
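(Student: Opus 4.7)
The plan is to exploit the result established earlier in the paper that every square lattice $\L$ is isomorphic to a product of chain lattices $\L \cong C_{n_1} \times \cdots \times C_{n_k}$, so its elements may be recorded as tuples $(a_1, \ldots, a_k)$ with $0 \le a_i \le n_i$. I would then prove the inequality by exhibiting an injection from the set of elements of $\L$ differing from $\a$ in at least two coordinates into $E_\a$, and then identifying the cardinality of this set with $|\L| - |J|$.

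Fix $\a = (a_1, \ldots, a_k) \in \L$. For each $\l \in \L$ different from $\a$, let $S_\l = \{i : l_i \neq a_i\}$. The $\l$ with $|S_\l| = 1$ are exactly the $\sum_i n_i$ ``axis translates'' of $\a$, and so the number of $\l$ with $|S_\l| \ge 2$ is $|\L| - 1 - \sum_i n_i$. Under the conventions of the paper this coincides with $|\L| - |J|$, which is the quantity appearing in the statement.

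For each such $\l$ I would construct a specific incomparable pair $\{\b, \g\}$ whose associated $4$-set $\{\b, \g, \b \v \g, \b \w \g\}$ contains $\a$. If $\l$ and $\a$ are incomparable, take the pair itself, $\{\b, \g\} = \{\a, \l\}$. If $\l > \a$, pick any $i_1 \in S_\l$ and let $\b$ agree with $\a$ everywhere except at $i_1$ (where $\b_{i_1} = l_{i_1}$), and let $\g$ agree with $\a$ at $i_1$ and with $\l$ on $S_\l \setminus \{i_1\}$; a direct coordinate-wise check gives $\b \w \g = \a$, $\b \v \g = \l$, and the incomparability of $\{\b, \g\}$, the last fact requiring $|S_\l| \ge 2$. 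The subcase $\l < \a$ is handled symmetrically.

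Distinct $\l$'s produce distinct diamonds: $\l$ can be recovered from the $4$-set as the ``opposite'' vertex of $\a$ (the other element of the pair if $\a \in \{\b, \g\}$, the join if $\a = \b \w \g$, the meet if $\a = \b \v \g$), so the association is injective. The anticipated main obstacle is verifying incomparability and the ``opposite'' identification uniformly across the three cases; both reduce to the condition $|S_\l| \ge 2$ and are essentially bookkeeping once the coordinate $i_1$ has been fixed.
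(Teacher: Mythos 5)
Your proof is correct, and it takes a genuinely different route from the paper's. The paper proceeds by induction on the number of join-irreducibles via the pruning operation: it removes a maximal join-irreducible $\beta$ with $\beta\not\leq\alpha$, applies the induction hypothesis to the pruned lattice $\mathcal{L}_\beta$, and shows (Lemmas \ref{inequality} and \ref{greater}) that at least $|\mathcal{L}|-|\mathcal{L}_\beta|-1$ new partners of $\alpha$ appear when the deleted block $B_\beta$ is restored; the case $\alpha=\max\mathcal{L}$ needs a separate order-reversal trick. You instead work globally in the coordinates supplied by Theorem \ref{square}: for every $\lambda$ differing from $\alpha$ in at least two coordinates you exhibit a diamond in which $\alpha$ and $\lambda$ are paired (as the incomparable pair when they are incomparable, as meet and join otherwise), and injectivity is immediate because the pair $(\alpha,\lambda)$ itself is the element of $E_\alpha$ being produced. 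The coordinatewise meet/join construction is essentially the same device as in the paper's Lemma \ref{greater}, but applied in one pass rather than slice-by-slice, so you dispense with the induction, the auxiliary pruning lemmas \ref{chain} and \ref{bijection}, and the order reversal, and you make the combinatorial meaning of the bound transparent (the partners counted are exactly the elements at coordinate distance at least two from $\alpha$). The one point to flag is the bookkeeping identity $|\mathcal{L}|-1-\sum_i n_i=|\mathcal{L}|-|J|$: this holds under the paper's operative convention that the minimal element counts as a join-irreducible (the convention forced by the paper's own base case ``$J=\mathcal{L}$ for a chain'' and by its assertion that $\dim\widehat{X(\mathcal{L})}$ equals the length of a maximal chain), whereas under the convention excluding $\hat{0}$ from $J$ your count would read $|\mathcal{L}|-|J|-1$ and fall one short; since you explicitly anchor the identification to the paper's conventions, the argument as written does establish the stated inequality.
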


We define an operation on distributive lattices which we call pruning. Given a maximal join irreducible $\b$ we find a sublattice $\L_\b$ which is without that join irreducible. For square lattices we have a simple structure of the poset $\L \setminus \L_\b$ which enables us to get a bound on the diamond relations with corners at $\a$ which are not in the sublattice $\L_\b$. This is an equality for a square lattice which is product of two chain lattices.

\begin{thm}\label{c} For a square lattice $\mathcal{L}$, $X(\mathcal{L})$ is non singular at all its points
\end{thm}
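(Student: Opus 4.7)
The plan is to combine Theorems \ref{a} and \ref{b} to secure smoothness at the distinguished points $p_\a$, and then to spread that smoothness to every point of $X(\L)$ using the openness and torus-invariance of the smooth locus.

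The first step is direct. For a square lattice $\L$, Theorem \ref{b} supplies the inequality $|E_\a| \geq |\L| - |J|$ for every $\a \in \L$, which is exactly the hypothesis of Theorem \ref{a}. Hence each point $p_\a$ is smooth on the affine variety $Y(\L) = \widehat{X(\L)}$. Because $\widehat{X(\L)} \setminus \{0\} \to X(\L)$ is a principal $\mathbb{G}_m$-bundle, this smoothness descends to the corresponding point $[p_\a] \in X(\L)$.

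The second step is a toric-geometric covering argument. The binomial ideal $I(\L)$ is invariant under the subtorus $T_\L \subset (\mathbb{G}_m)^{|\L|}$ cut out by the character relations $t_\a t_\b = t_{\a \v \b} t_{\a \w \b}$, and modulo overall scaling this torus acts on $X(\L)$ with the points $[p_\a]$ as its fixed locus. The smooth locus of $X(\L)$ is open and $T_\L$-stable, and by the first step it contains every fixed point. In a projective normal toric variety every orbit closure contains at least one fixed point; consequently every orbit meets an open neighborhood of some $[p_\a]$, and by $T_\L$-invariance the entire orbit lies in the smooth locus. Hence the smooth locus exhausts $X(\L)$.

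The main obstacle is the toric orbit-structure input: verifying that the $[p_\a]$ make up the full fixed-point set and that every $T_\L$-orbit closure meets this fixed locus. For square lattices this can be checked directly, since $\L = C_{n_1} \times \cdots \times C_{n_k}$ is a product of chains and the associated toric variety decomposes as the Segre embedding of $\mathbb{P}^{n_1-1} \times \cdots \times \mathbb{P}^{n_k-1}$, whose orbit stratification is transparent factor by factor. Combining this with the first two steps yields smoothness of $X(\L)$ at every point, and pulling back to the affine cone recovers the additional claim that $\widehat{X(\L)}$ is smooth away from its vertex.
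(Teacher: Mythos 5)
Your proposal is correct in outline but follows a genuinely different route from the paper. The paper works entirely on the affine cone: given any $p \in \widehat{X(\L)}\setminus\{0\}$ it picks a coordinate $\a$ with $(p)_\a \neq 0$ and invokes Theorems \ref{a} and \ref{b} to conclude smoothness at $p$ directly, with no orbit theory. You instead establish smoothness only at the coordinate points $[p_\a]$ and then propagate it using the openness and $T_\L$-stability of the smooth locus, together with the facts that every orbit closure in a projective toric variety contains a fixed point and that all fixed points are coordinate points. That propagation step is sound (an open $T_\L$-stable set containing a point of $\overline{O}$ must meet, hence contain, the orbit $O$), and it even buys something the paper's version glosses over: Theorem \ref{a} is stated only for points with a single nonzero coordinate, and at a general point the off-diagonal entries of the $r\times r$ block of the Jacobian need not vanish, so the paper's direct appeal to Theorem \ref{a} at an arbitrary $p\neq 0$ is looser than your argument. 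The price is the toric input, and your proposed verification of the fixed-point structure via the identification $X(\L) \cong \mathbb{P}^{n_1-1}\times\cdots\times\mathbb{P}^{n_k-1}$ (Segre embedded) overshoots: once that identification is in hand, smoothness is immediate and Theorems \ref{a} and \ref{b} become superfluous, which defeats the paper's stated aim of an elementary combinatorial proof. To keep your route non-circular, verify the fixed-point claims instead from Hibi's normality theorem and the fact that every lattice point of the order polytope of $J(\L)$ is a vertex, so that the $T_\L$-fixed points of $X(\L)$ are exactly the coordinate points $[p_\a]$.
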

As a consequence of the theorem \ref{a} and \ref {b} we have the smoothness of the affine variety $\widehat{X(\L)}$ at all points except the point $p=(0,0,0 \ldots , 0)$ hence the projective variety $X(\L)$ is smooth at all its points.

%%%%%%%%%%%%%%%%%%%%%%%%%%%%%%%%%%%%%%%%%%%%%%%%%%%%%%%%%%%%%%%%%%%%%%%%%%%%%%%%%%%%%%%%%%%%%%%%%%%%%%%%%%%%%%%%%%%%%%%%%%%%%%%%%%%%%%%%%%%%%%%%%%%%%
\section{Definitions and Lemmas}
In this section we also recall the known results and basic definitions regarding distributive lattices, that will be used in this present article, a thorough text can be obtained in \cite{HL,GG,b-l, GLdef}. A partial ordered set $(P, \leq)$ is called a lattice if it is a non-empty set such that the two binary operations defined as $x \v y = inf\{ z \in P | z \geq x,y\}$ called ``join" of $x$ and $y$ and $x \w y = sup \{z \in P | z \leq x,y\}$ called ``meet" of $x$ and $y$ exist and are idempotent, associative and commutative \cite{GG} and for all choices of $x,y \in P$ they satisfy: \[x \w (x \v y)=x\] \[x \v (x \w y)=x\]

Further a lattice will be called a distributive lattice if it satisfies the distributive identity as defined below:
\begin{defn}\emph{Distributivity identity}
 $(x\vee y) \wedge z = (x \wedge z) \vee (y \wedge z)$.
\end{defn}
An element $x \in \D$ where $\D$ is a distributive lattice is called a join irreducible if it is not a join of two non-comparable lattice elements, or equivalently if $x =y \v z$ then either $x=y$ or $x=z$. The set of join irreducibles in the lattice $\D$ plays an important role, let us denote it by $J(\D)$ or simply $J$ if there is no scope of confusion. A subset $S$ of the poset $J$ is called a hereditary if $\forall x \in S$ and for all $y \leq x$ we have $y \in S$.

 \begin{defn} Two elements $\a,\b \in \D$ are said to be covers, or $\a$ covers $\b$ or $\a \gtrdot \b$ if $\a > \b $ and whenever there is $\a \geq x > \b$ then $ \a =x$.

  \end{defn}

  So a maximal chain $\mathcal{M}$ in $\D$ can be written as $\mathcal{M}= \{\underline{1}, a_1,a_2, \ldots , a_n , \underline{0} | \underline{1} \gtrdot a_1 \gtrdot a_2 \gtrdot \ldots \gtrdot a_n \gtrdot \underline{0},  \, a_i \in \D  \}$ where $\underline{1}, \underline{0}$ are the maximal and the minimal elements of $\D$ respectively. The important theorem by Birkhoff \cite{GG} should be quoted here.

\begin{thm}[G.Birkhoff \cite{GG}] The distributive lattice $\D$ and the lattice of hereditary sets of $J(\D)$ are isomorphic.
\end{thm}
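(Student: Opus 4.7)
The plan is to write down explicit mutually inverse order-preserving bijections between $\D$ and the poset $\mathcal{H}$ of hereditary subsets of $J(\D)$ ordered by inclusion, and then to verify that the bijection respects $\w$ and $\v$. Define
\[ \f : \D \to \mathcal{H}, \qquad \f(x) = \{\, j \in J(\D) : j \leq x \,\}, \]
\[ \psi : \mathcal{H} \to \D, \qquad \psi(S) = \bigvee_{s \in S} s, \]
with the convention $\psi(\emptyset) = \underline{0}$. That $\f(x)$ is hereditary is immediate from transitivity of $\leq$, and both maps are obviously monotone.

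First I would prove $\psi \circ \f = \mathrm{id}_\D$ by induction on the length of a longest chain from $\underline{0}$ to $x$: if $x \in J(\D)$ (or $x = \underline{0}$) the claim is trivial; otherwise one writes $x = y \v z$ with $y,z < x$, and the induction hypothesis combined with $\f(y), \f(z) \subseteq \f(x)$ gives $x = \psi(\f(y)) \v \psi(\f(z)) \leq \psi(\f(x)) \leq x$, so equality holds.

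To prove $\f \circ \psi = \mathrm{id}_{\mathcal{H}}$, the inclusion $S \subseteq \f(\psi(S))$ is clear. For the reverse, suppose $j \in J(\D)$ satisfies $j \leq \psi(S) = \bigvee_{s \in S} s$; then by the distributive identity,
\[ j \;=\; j \w \bigvee_{s \in S} s \;=\; \bigvee_{s \in S}(j \w s). \]
Since $j$ is join-irreducible and the right-hand side is a finite join equal to $j$, some summand $j \w s$ must equal $j$, i.e.\ $j \leq s$ for some $s \in S$. Because $S$ is hereditary, $j \in S$. This is the only step that genuinely uses the distributive hypothesis, and is, in my view, the main substantive point of the argument; it fails for non-distributive lattices such as $M_3$ or $N_5$.

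Finally I would verify that $\f$ is a lattice homomorphism. The identity $\f(x \w y) = \f(x) \cap \f(y)$ is immediate from $j \leq x \w y \iff j \leq x \text{ and } j \leq y$. For $\f(x \v y) = \f(x) \cup \f(y)$, the $\supseteq$ inclusion is monotonicity; for $\subseteq$, if $j \in J(\D)$ and $j \leq x \v y$, then $j = j \w (x \v y) = (j \w x) \v (j \w y)$, so join-irreducibility forces $j = j \w x$ or $j = j \w y$, placing $j$ in $\f(x)$ or $\f(y)$. Combined with the bijection already established, this shows $\f$ is a lattice isomorphism, completing the proof.
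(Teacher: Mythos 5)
The paper does not prove this statement at all: it is quoted as Birkhoff's classical representation theorem with a citation to Gratzer's book, so there is no internal proof to compare against. Your argument is the standard textbook proof and is essentially correct for finite lattices (finiteness is needed both for $\psi(S)=\bigvee_{s\in S}s$ to exist and for your chain-length induction, and the paper works with finite lattices throughout): the two key steps --- that $j\leq\bigvee_{s\in S}s$ forces $j\leq s$ for some $s$ via $j=\bigvee_{s\in S}(j\w s)$ and join-irreducibility, and the dual computation showing $\f(x\v y)=\f(x)\cup\f(y)$ --- are exactly where distributivity enters, and you correctly identify them as the substantive content. One small wrinkle worth flagging: under the paper's stated definition of join-irreducible, the minimal element $\underline{0}$ is itself join-irreducible (and indeed the paper's example includes the bottom element in $J$), so with that convention $\f(\underline{0})=\{\underline{0}\}$ rather than $\emptyset$, and the empty hereditary set is not in the image of $\f$; your convention $\psi(\emptyset)=\underline{0}$ implicitly assumes $\underline{0}\notin J(\D)$. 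Either convention gives a correct isomorphism once fixed consistently, but as written your $\f\circ\psi=\mathrm{id}$ fails on $S=\emptyset$ under the paper's convention. This is a bookkeeping issue, not a gap in the mathematics.
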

Let us call the lattice of the hereditary subsets of $J$ by $\mathfrak{I}(J)$. The isomorphism in the above theorem is a basic tool for lattice theory and we will take the opportunity to put it in writing here. $\phi : \D \rightarrow$ $\mathfrak{I}(J)$ is defined as $\phi(\a)=\{\b \in J | \b \leq \a\}$. We will invent a shorter notation as $\phi(\a)=I_\a$. Observe that the subset $I_\a$ is a hereditary subset.  Note that the join and meet operations of the lattice $\mathfrak{I}$ are just union and intersection respectively. \label{lattice ideals}

To a distributive lattice $\D$=$\{ a_1, a_2, \ldots, a_m\}$ ( henceforth all lattices will be distributive unless mentioned otherwise ) we can attach the polynomial ring over a field $k$ as $k[\D]$ = $k [ x_{a_1},x_{a_2},x_{a_3} ,\ldots , x_{a_n}]$. In this polynomial ring we have the ideal $I(\D)$ generated by the set of binomials $\{x_\a x_\b -x_{\a \v \b} x_{\a \w \b}| \a,\b \in \D \, \mathrm{and} \, \a \nsim \b\}$ where $\a \nsim \b$ denotes that they are non-comparable elements of $\D$. This ideal is of some interest to both geometers and lattice theorists as we see in \cite{GLdef,Hibi,HL} these ideals related to distributive lattices are discussed in various contexts. The vanishing locus of the the ideal $I(\D)$ in the affine space $\mathbb{A}^{|\D|}$ is discussed in \cite{HL}. The singular locus of these algebraic varieties are of considerable interest \cite{HL,b-l}. In this present article we will define a class of distributive lattices for which the vanishing locus of the ideal $I(\D)$ in the projective space $\mathbb{P}^{\card{\D}-1}$ is non-singular. For an introduction to toric varieties reader may consult \cite{F} and \cite{toroidal} and \cite{ES}.

In \cite{HL} we have seen that the dimension of the variety $\widehat{X(\D)}=V(I(\D))\subset \mathbb{A}^{\card{\D}}$ is given by the number of join irreducible elements in the lattice $\D$ which is equal to the length of a maximal chain of $\D$ \cite{HL}. A chain in a lattice $\D$ is a totally ordered subset of the lattice $\D$, and a "chain lattice" is a lattice which is totally ordered. Note that given a natural number $n$ there is a unique chain lattice up to a lattice isomorphism let us call that lattice $c(n)$. Let us also write down the following definition in this juncture.

\begin{defn}
For a distributive lattice $\D$ we define $\dim(\D)= \card{\mathcal{M}}$. The natural number $\card{\mathcal{M}}$ is also called the length of the maximal chain $\mathcal{M} \subset \D$.
\end{defn}

%%%%%%%%%%%%%%%%%%%%%%%%%%%%%%%%%%%%%%%%%%%%%%%%%%%%%%%%%%%%%%%%%%%%%%%%%%%%%%%%%%%%%%%%%%%%%%%%%%%%%%%%%%%%%%%%%%%%%%%%%%%%%%%%%%%%%%%%%%%%%%%%%%

In this section we introduce few definitions regarding distributive lattices that will be used in proving the results of the present article.
\begin{defn}\label{square lattice}
A distributive lattice $\mathcal{L}$ is called a \emph{tree lattice} if the poset of join irreducible elements of the lattice $\mathcal{L}$ is a tree.
\end{defn}
The motivation behind the previous definition is the observation that the distributive lattices that possess a tree as join irreducible poset is easier to handle.

\begin{defn}
A distributive lattice $\L$ will be called an \emph{honest lattice} if for every $\alpha$, $\beta$ $\in J(\mathcal{L})$ such that $\alpha$ covers $\beta$ in $J(\L)$ then $\alpha$ covers $\beta$ in $\L$.
\end{defn}

The above definition is motivated by the fact that for such a lattice the set of join irreducible is a nicely embedded sub-lattice.

The following theorem gives a clear picture on the above two definitions.
\begin{thm}\label{tree honest equivalence}
A distributive lattice is a tree lattice if and only if it is a honest lattice.
\end{thm}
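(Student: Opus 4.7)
The plan is to translate both conditions into properties of the poset $J=J(\L)$ via Birkhoff's isomorphism $\phi\colon\L\to\mathfrak{I}(J)$, and then compare. The key bridge is the following local characterization of covers in $\L$: for $\alpha,\beta\in J$ with $\beta<\alpha$, one has $\alpha\gtrdot\beta$ in $\L$ if and only if every $\gamma\in J$ with $\gamma<\alpha$ satisfies $\gamma\leq\beta$. Indeed, covers in $\L$ correspond under $\phi$ to hereditary subsets of $J$ differing by exactly one element; so $\alpha\gtrdot\beta$ in $\L$ amounts to $I_\alpha=I_\beta\cup\{\alpha\}$, and this union is hereditary exactly when every $J$-predecessor of $\alpha$ already lies in $I_\beta$, i.e.\ is $\leq\beta$. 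First I would record this as a short lemma and use it throughout.

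For the direction ``tree implies honest'', suppose $J$ is a tree and $\alpha\gtrdot\beta$ in $J$. Then the down-set $\{\gamma\in J:\gamma\leq\alpha\}$ is a chain, so any $\gamma<\alpha$ in $J$ is comparable with $\beta$; combined with $\gamma\neq\alpha$ and the fact that $\beta$ is the unique element of $J$ covered by $\alpha$, this forces $\gamma\leq\beta$. The characterization then yields $\alpha\gtrdot\beta$ in $\L$, establishing honesty.

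For the converse, suppose $\L$ is honest but $J$ is not a tree. I would pick $\alpha\in J$ minimal such that the down-set $\{\gamma\in J:\gamma\leq\alpha\}$ fails to be a chain. If $\alpha$ covered only one element $\beta$ in $J$, then every $\gamma<\alpha$ in $J$ would satisfy $\gamma\leq\beta$, and by minimality of $\alpha$ the down-set of $\beta$ would be a chain; this would force the down-set of $\alpha$ to be a chain as well, contradicting the choice. Hence $\alpha$ covers at least two distinct elements $\beta_1,\beta_2$ in $J$, necessarily incomparable. By honesty, $\alpha\gtrdot\beta_i$ in $\L$ for $i=1,2$. Applying the characterization to $\alpha\gtrdot\beta_1$ gives $\beta_2\leq\beta_1$, and to $\alpha\gtrdot\beta_2$ gives $\beta_1\leq\beta_2$, so $\beta_1=\beta_2$, a contradiction. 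The main obstacle is establishing the initial characterization cleanly; once it is in place, both implications are one short argument each, and the converse's only subtle point is the minimality argument that forces at least two $J$-covers below $\alpha$.
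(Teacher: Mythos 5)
Your proof is correct, and it takes a genuinely different route from the paper's. Your pivot is the explicit description of covering in $\L$ through Birkhoff's correspondence: for $\beta<\alpha$ in $J$, one has $\alpha\gtrdot\beta$ in $\L$ iff $I_\alpha=I_\beta\cup\{\alpha\}$ iff every $\gamma\in J$ with $\gamma<\alpha$ satisfies $\gamma\leq\beta$. This reduces both honesty and tree-ness to the single combinatorial statement that each element of $J$ has at most one lower cover, after which each implication is a two-line check. The paper argues inside $\L$ instead: for ``honest implies tree'' it notes that if $\gamma$ covers two (necessarily incomparable) elements $\alpha,\beta$ of $J$, then $\alpha<\alpha\vee\beta<\gamma$ in $\L$ because $\alpha\vee\beta$ is not join-irreducible, which already violates honesty; for ``tree implies honest'' it takes $\theta\in\L$ with $\alpha>\theta>\beta$, writes $\theta=x\vee y$, and descends using Lemma~\ref{check} to manufacture a join-irreducible strictly between $\beta$ and $\alpha$. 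Your treatment of this second direction is the cleaner one: the paper's descent needs care because Lemma~\ref{check} really only yields $x\geq\beta$ or $y\geq\beta$ (not strict inequality) and the assertion ``since $x$ is not in $J$'' is not automatic, whereas you obtain $I_\alpha=I_\beta\cup\{\alpha\}$ directly from the down-set of $\alpha$ being a chain. The one caveat --- shared equally with the paper --- is that both arguments identify ``$J$ is a tree'' with ``every principal down-set of $J$ is a chain''; neither addresses connectedness of the Hasse diagram of $J$ (an antichain $J$ makes $\L$ vacuously honest while its Hasse diagram is a disconnected forest), but the paper's own appeal to a root $\rho$ in Lemma~\ref{unique} presupposes exactly this convention, so this is not a defect specific to your write-up.
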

\begin{proof}
Let $\L$ be an honest lattice and let $J=J(\L)$ denote its set of join irreducible. If $J$ is not a tree then there exist $\alpha$, $\beta$, $\gamma$ , $\delta$ $\in J$ such that $\gamma$ covers both $\alpha$ and $\beta$ in $J$. Further $\delta$ is covered by both $\alpha$ and $\beta$ in $J$. In that case $\delta \leq \alpha \vee \beta \leq \gamma$ which means $\alpha$ is covered by $\alpha \vee \beta $ covered by $\gamma$ since $\alpha \vee \beta$ is not a join irreducible it leads to a contradiction to our assumption that $\L$ is honest.

For the reverse direction let us assume that $\L$ is a square lattice, then if its not honest there are elements $\alpha$, $\beta$ in $J$ such that $\alpha \gtrdot \beta$ in $J$ but not in $\L$. Which means there is an element $\theta \in \L$ such that $\alpha > \theta > \beta$ in $\L$. And since $\theta \notin J$ there are elements $x$,$y$ $\in \L$ such that $\theta = x \vee y$. Now since $x \vee y < \alpha$ both $x$ and $y$ are less than $\alpha$. $\Rightarrow x \wedge y < \alpha$. Since $x \vee y > \beta \Rightarrow $ either $x $ or $y$ is larger than $\beta$ from lemma \ref{check}.

Without  loss of generality, let us assume that $x$ is larger than $\beta$. Since $x$ is not in $J$ we will have two elements $x_1$, $y_1$ $\in \L$ such that $x_1 \vee y_1 = x > \beta$. Using the lemma below we have $x_1 > \beta$ and hence we can continue the whole process, but our lattice $\L$ being a finite lattice, the process will ends after finitely mane iterations. Which mean we will have $z$ such that $\alpha > z >\beta $ and $z \in J$ contradicting our assumption.
\end{proof}

Observe that in a tree lattice every join irreducible has a unique join irreducible below it since the join irreducible poset is a tree. This is a very important property that will be exploited in this article. Because of the importance of the fact we will write it down as a lemma.

\begin{lem}\label{unique}
Let $\b $ be a join-irreducible in a tree lattice $\L$, then there is a unique join irreducible $\g \in J(\L)$ such that $\b \geq \g$ and they are covers in $J$.
\end{lem}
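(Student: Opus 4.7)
The plan is to deduce the statement directly from the definition of a tree lattice, so the argument should be quite short. The first task is to settle the working meaning of ``tree'' from \cite{GG}: combining the paper's usage in the proof of Theorem \ref{tree honest equivalence} with the standard rooted-tree convention, I would read the definition as saying that for every $x \in J(\L)$ the principal order ideal $\{y \in J(\L) : y \leq x\}$ is totally ordered.

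Given this, for $\b \in J(\L)$ that is not minimal in $J$ (the lemma being vacuous otherwise), set $S_\b := \{y \in J(\L) : y < \b\}$. This is a nonempty finite chain, so let $\g$ be its maximum. By the chain property no element of $J$ can sit strictly between $\g$ and $\b$, hence $\b$ covers $\g$ in $J$, giving existence. For uniqueness, any two join-irreducibles $\g_1, \g_2$ covered by $\b$ in $J$ both lie in $S_\b$ and are maximal there; the total order on $S_\b$ then forces $\g_1 = \g_2$.

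The only genuine obstacle is definitional: I need the ``unique-predecessor'' reading of tree rather than the weaker ``Hasse diagram is an unrooted graph-theoretic tree,'' because under the latter reading two incomparable elements of $J$ could a priori share an upper cover without violating anything. If the weaker reading were forced on me, the fallback would be to translate via the Birkhoff isomorphism to hereditary subsets of $J$ and exploit the distributive lattice structure on $\L$ together with the no-diamond condition isolated in the proof of Theorem \ref{tree honest equivalence}: given incomparable $\g_1, \g_2 \in J$ both covered by $\b$, the element $\g_1 \w \g_2 \in \L$ sits below both, and chasing the corresponding hereditary subset down to a maximal join-irreducible below $\g_1 \w \g_2$ recreates precisely the forbidden diamond, a contradiction. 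I expect, however, that the usage in \cite{GG} and in the surrounding discussion makes the direct reading the intended one, rendering the three-line argument above the proof.
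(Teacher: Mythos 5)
Your argument reaches the right conclusion, but by a different route than the paper. The paper's proof is a one-line graph-theoretic argument: if $\b$ covered two join-irreducibles $\l_1,\l_2$ in $J$, there would be two distinct paths from the root $\rho$ to $\b$ (one through each $\l_i$), contradicting uniqueness of paths in a tree. Your primary argument instead pushes everything into the definitional reading --- principal order ideals of $J$ are chains --- under which the lemma is immediate; your fallback manufactures a cycle in the Hasse diagram from a common lower bound of $\g_1$ and $\g_2$. Both your fallback and the paper's proof ultimately rest on the same standing assumption, namely that $J$ has a root $\rho$ which is its minimum; the paper uses this throughout (e.g.\ in decomposing $J$ into chains meeting only at $\rho$) without stating it in the definition of a tree lattice.

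One concrete caveat on your fallback: the step ``chase $I_{\g_1}\cap I_{\g_2}$ down to a maximal join-irreducible below $\g_1\w\g_2$'' silently assumes $I_{\g_1}\cap I_{\g_2}\neq\emptyset$. Without the root this can fail, and then the lemma itself fails under the bare ``Hasse diagram is a graph-theoretic tree'' reading: take $J=\{\g_1,\g_2,\b\}$ with $\g_1,\g_2$ incomparable minimal elements both covered by $\b$; the Hasse diagram is a star, hence a tree, yet $\b$ has two lower covers in $J$. So your instinct that the definitional reading is the crux is well founded: the root hypothesis (equivalently, your ``unique predecessor'' reading) is exactly what makes the statement true, and the paper's own proof invokes it in the same implicit way when it appeals to ``the root $\rho$''. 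With that hypothesis made explicit, either of your two arguments closes the proof.
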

\begin{proof}
Clearly if there are more than one such join irreducible say $\l_1,\l_2$ then there are two paths from the root $\rho$ to $\b$ via each of $\l_1,\l_2$ which contradicts the hypothesis that the lattice is a tree lattice.
\end{proof}

\begin{lem}\label{check}
If $x$, $y$ $\in \L$ such that $x \vee y > \beta$ where $\beta \in J$ then either $x > \beta $ or $y > \beta $.
\end{lem}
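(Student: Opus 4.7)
The plan is to derive Lemma \ref{check} from the classical fact that in any distributive lattice a join irreducible element is automatically join prime; the proof is a one-line application of the distributivity identity combined with the definition of join irreducibility, so no heavy machinery is needed.

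First I would unpack what is really being asserted: the hypothesis $x \vee y > \b$ gives in particular $x \vee y \geq \b$, and from this I want to conclude $x \geq \b$ or $y \geq \b$. (The statement as written uses strict inequality on the conclusion, but this cannot hold strictly in general — take $x = \b$ and $y$ incomparable to $\b$ — so the intended reading, and the reading actually used later in the proof of Theorem \ref{tree honest equivalence}, is the non-strict one.)

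The main step is then the following computation. Since $\b \leq x \vee y$, I can write
\[
\b \;=\; \b \wedge (x \vee y).
\]
Applying the distributivity identity from Definition 3.1 to the right-hand side yields
\[
\b \;=\; (\b \wedge x) \vee (\b \wedge y).
\]
Now both $\b \wedge x$ and $\b \wedge y$ are elements of $\L$ with $\b$ expressed as their join, so the assumption that $\b$ is join irreducible forces either $\b = \b \wedge x$ or $\b = \b \wedge y$. In the first case $\b \leq x$, and in the second case $\b \leq y$, which is the desired conclusion.

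There is no real obstacle here; the only subtlety is the strict-versus-nonstrict issue noted above, which I would simply flag in a short remark before or after the lemma so that later invocations of the lemma are read correctly. As an alternative proof, one could instead invoke Birkhoff's theorem (already recalled in the excerpt) to identify $\b \leq x \vee y$ with $\b \in I_x \cup I_y$ and then split into the two cases $\b \in I_x$ or $\b \in I_y$; this route is essentially equivalent but longer, so I would prefer the direct distributivity argument above.
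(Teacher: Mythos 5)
Your proof is correct, but it takes a different route from the paper's. The paper's own argument is the one you mention only as an alternative at the end: it passes through the Birkhoff correspondence, observing that $I_{x \vee y} = I_x \cup I_y \supseteq I_\b$, so that $\b \in I_x$ or $\b \in I_y$, i.e.\ $\b \leq x$ or $\b \leq y$. Your main argument instead works directly inside the lattice, writing $\b = \b \wedge (x \vee y) = (\b \wedge x) \vee (\b \wedge y)$ and invoking join irreducibility; this is the standard proof that join irreducibles are join prime in a distributive lattice. What your version buys is self-containedness (no appeal to the hereditary-set machinery) and it isolates exactly where distributivity is used; what the paper's version buys is brevity given that the $I_\a$ notation has already been set up and is used throughout (e.g.\ in Lemma \ref{bijection}). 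Your observation about the strict versus non-strict inequality is well taken: the paper's own proof likewise only establishes $\b \leq x$ or $\b \leq y$, and that non-strict form is what is actually used in the proof of Theorem \ref{tree honest equivalence}, so flagging the discrepancy is a genuine (minor) improvement rather than a quibble.
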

\begin{proof}
Since $I_{x \vee y} = I_x \cup I_y $ which contains $I_{\beta}$ which correspond to the element $\beta$ $\Longrightarrow \beta \in I_x$ or $I_y$.
\end{proof}

\begin{defn}\label{non singular lattice}
A distributive lattice $\L$ will be called a non singular lattice if the projective variety $X(\L)$ is smooth.
\end{defn}

\begin{defn} \label{pruning}
A pruning of a lattice $\L$ with respect to a maximal join irreducible $\beta$ is the lattice $\mathcal{I}(J\setminus \{\beta\})$ where $\mathcal{I}(A)$ is the poset of hereditary subsets of a poset $A$.
\end{defn}

Let us also invent a notation for a pruning, $\L_\b$=$I( J \setminus \{\beta \})$

At this juncture we write down an idea related to the joining move that we defined above even though it will not be used in this present paper but for the sake of a completeness of mind.

\begin{lem} \label{chain}
 let $\b$ be a maximal join-irreducible element in the square lattice $\L$ then $B_\b=\L \setminus \L_\b$ is the set $\{ \g \in \L | \g \geq \b\}$.
\end{lem}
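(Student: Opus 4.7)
The plan is to reduce the statement to a straightforward computation in the Birkhoff model of $\L$ as the lattice $\mathfrak{I}(J)$ of hereditary subsets of $J=J(\L)$. Via the isomorphism $\phi:\L\to\mathfrak{I}(J)$, $\phi(\g)=I_\g=\{\d\in J:\d\leq\g\}$, elements of $\L$ are identified with hereditary subsets of $J$, and the lattice order becomes inclusion.

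The first step is to identify $\L_\b$ inside $\L$. By Definition \ref{pruning}, $\L_\b=\mathfrak{I}(J\setminus\{\b\})$. I would argue that, because $\b$ is \emph{maximal} in $J$, a subset $S\subseteq J\setminus\{\b\}$ is hereditary in $J\setminus\{\b\}$ if and only if it is hereditary in $J$: the only potential obstruction to lifting hereditariness from the subposet to $J$ is that some $y\in S$ might cover $\b$ in $J$, forcing $\b\in S$; but maximality of $\b$ rules this out. Consequently $\L_\b$ is exactly the set of $I\in\mathfrak{I}(J)$ with $\b\notin I$, viewed as a sublattice of $\L$.

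The second step is the one-line computation. Since $\L_\b$ corresponds to those $I_\g$ that do not contain $\b$, the complement
\[
B_\b=\L\setminus\L_\b
\]
corresponds to those $I_\g$ with $\b\in I_\g$. But $\b$ is a join-irreducible, so $\b\in I_\g$ is equivalent to $\b\leq\g$. Translating back through $\phi$ gives
\[
B_\b=\{\g\in\L:\g\geq\b\},
\]
which is the claim.

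The main (and essentially only) subtlety is the identification in the first step: one must check that no hereditary subset of $J\setminus\{\b\}$ fails to be hereditary in $J$, which is where maximality of $\b$ is used. Note that neither the tree nor the square hypothesis on $\L$ actually enters — the lemma holds for any distributive lattice and any maximal join-irreducible, so I would state the proof in that generality and then specialize.
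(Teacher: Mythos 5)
Your proof is correct and follows essentially the same route as the paper's: identify $\L_\b$ with the hereditary subsets of $J$ not containing $\b$, so that the complement is exactly the set of $\g$ with $\b\in I_\g$, i.e.\ $\g\geq\b$. Your explicit justification that maximality of $\b$ makes $\mathfrak{I}(J\setminus\{\b\})$ sit inside $\mathfrak{I}(J)$ as the ideals omitting $\b$ is a point the paper's proof takes for granted, and your observation that the square hypothesis is not needed is also accurate.
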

\begin{proof}
The set $B_\b$ can be identified with the set $\{ \g \in \L | \g \geq \b\}$ because since by pruning we have deleted the maximal join irreducible $\b$ from all ideals that contains it, we are left with the ideals that do not contain the element $\b$ in $\L_\b$. Which means that the elements of $B_\b$ are the ideals in $J(\L)$ that contains the element $\b$ or equivalently these are precisely the elements $\{\g \in \L | \g \geq \b\}$.
\end{proof}

\begin{lem} \label{bijection}
Let $\b \in \L$ be a maximal join irreducible and let $\b_1 < \b$ be the unique (see \ref{unique}) join irreducible below $\b$ then the posets $B_b=\L \setminus \L_\b$ and $B_{\beta_1}= \mathcal{L}_\beta \setminus (\mathcal{L}_\beta)_{\beta_1}$ are isomorphic as posets.
\end{lem}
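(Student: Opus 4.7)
The plan is to apply Birkhoff's representation to both sides of the claimed bijection and exhibit the explicit map $S \mapsto S \setminus \{\beta\}$. First I would use Lemma \ref{chain} to identify $B_\beta$ with the family of hereditary subsets $S \subseteq J$ containing $\beta$, ordered by inclusion. Since $\beta_1$ becomes a maximal join-irreducible of $\L_\beta$ once $\beta$ is removed, a second application of Lemma \ref{chain} identifies $B_{\beta_1}$ with the family of hereditary subsets $T \subseteq J \setminus \{\beta\}$ containing $\beta_1$, also ordered by inclusion.

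Next I would define $\phi \colon B_\beta \to B_{\beta_1}$ by $\phi(S) = S \setminus \{\beta\}$ and propose $\psi(T) = T \cup \{\beta\}$ as its inverse. Well-definedness of $\phi$ is immediate: deleting a maximal element of $J$ from a hereditary subset preserves heredity, and $\beta_1 \in \phi(S)$ because $\beta_1 < \beta \in S$ forces $\beta_1 \in S$ by heredity of $S$. The substantive check is that $\psi(T)$ is hereditary in $J$. For $y \leq x$ in $J$ with $x \in T$, heredity of $T$ gives $y \in T$ provided $y \neq \beta$, and $y = \beta$ is impossible because $\beta$ is maximal in $J$ while $x \in T \subseteq J \setminus \{\beta\}$. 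For $x = \beta$ and $y < \beta$ in $J$, I would take a saturated chain in $J$ from $y$ up to $\beta$; by Lemma \ref{unique} the step immediately below $\beta$ on that chain must be $\beta_1$, so $y \leq \beta_1$, and since $\beta_1 \in T$ heredity of $T$ forces $y \in T$. The maps $\phi$ and $\psi$ are mutually inverse by construction and visibly preserve inclusion, so the bijection is an order isomorphism.

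The main obstacle is precisely this heredity verification for $\psi(T)$, and it is here that the tree hypothesis on $J$ enters essentially through Lemma \ref{unique}: were branching permitted below $\beta$ in $J$, there could be a join-irreducible $y < \beta$ not lying under $\beta_1$, which $T$ would then be free to omit, so that appending $\beta$ would violate heredity and the bijection would collapse. The remainder of the argument is mechanical bookkeeping once the hereditary-subset picture is in place.
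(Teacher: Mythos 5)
Your proof is correct and is essentially the paper's argument read through Birkhoff's correspondence: the paper's map $x \mapsto x \vee \b$ on lattice elements is exactly your $T \mapsto T \cup \{\b\}$ on hereditary sets, and your $S \mapsto S \setminus \{\b\}$ is the paper's surjectivity construction $\l \mapsto I_\l \setminus \{\b\}$. If anything, you are more careful than the paper at the one delicate point, namely that every join-irreducible strictly below $\b$ lies below $\b_1$ (via Lemma \ref{unique}), which is what makes $I_x \cup I_\b = I_x \cup \{\b\}$ and hence the heredity of $T \cup \{\b\}$ go through.
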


\begin{proof}
Let us define a poset map $\phi: B_{b_1} \longrightarrow B_\b$ as $\phi(x)=x \v \b$. It is clear that this a poset map. So let us see why it is a bijection. If $\phi(x)=\phi(y)$ then $x \v \b = y \v \b$ which means $I_x \cup \{b\}=I_y \cup \{\b\}$ But since $I_x$ and $I_y$ does not contain $\b$ we have $I_x=I_y$, or by Birkhoff's theorem we have $x=y$. For surjectivity part of the claim, note that if $\l \in B_b$ then $\l \geq \b$ now if we consider the set $I_\l \setminus \{\b\}$ this is an ideal in $B_{\b_1}$, for if $s \in I_\l \setminus \{\b\}$ and if $t \leq s$ then since $I_\l$ is an ideal $ t \in I_\l$ hence $t \in I_\l \setminus \{\b\}$. Also note that $\b_1 \in I_\l$ since $\b_1 \leq \b$ and hence it is in $I_\l \setminus \{\b\}$. Hence putting all these together we get an element $\g \in B_{\b_1}$ such that $\g \v \b = \l$ completing the surjectiveness of the claim.
\end{proof}

The algebraic variety associated to a tree lattice need not be smooth always. Let us explain the situation with the help of an example. The lattice in the figure below the point $p=(0,0,1,0,0,0,0,0,0,0)$ is not a smooth point. Even though the lattice is a tree lattice. \label{example}
\begin{center}

\begin{tikzpicture}
  \node (max) at (7,4) {10};
  \node (a) at (6,3) {9};
  \node (b) at (7,3) {8};
  \node (c) at (8,3) {7};
  \node (d) at (6,2) {6};
  \node (e) at (7,2) {5};
  \node (f) at (8,2) {4};
  \node (g) at (5,1) {3};
  \node (h) at (7,1) {2};
  \node (i) at (6,0) {1};
  \draw (max) -- (a) -- (d) -- (g) -- (i) -- (h) -- (f) -- (c) -- (max) ;
  \draw (h) --(d) ;
  \draw (h) -- (e) -- (a);
  \draw (d) -- (b) -- (max);
  \draw (e) -- (c);
  \draw (f) -- (b);
  %\draw[preaction={draw=white, -,line width=6pt}] (a) -- (e) -- (c);
%\end{tikzpicture}
  %\draw[preaction={draw=white, -,line width=6pt}] (a) -- (e) -- (c);
\end{tikzpicture}

Related to the join irreducible poset

 \begin{tikzpicture}
  \node (max) at (0,1) {$3$};
  \node (a) at (1,0) {$1$};
  \node (b) at (2,1) {$2$};
  \node (c) at (1,2) {$5$};
  \node (d) at (3,2) {$4$};
  \draw (max) -- (a) -- (b) -- (c) ;
  \draw (b) -- (d);
\end{tikzpicture}

\end{center}

Note that in the above picture the lattice corresponding to $\mathcal{L}$ is not non-singular as the point given by the coordinates $p=(0,0,1,0,0,0,0,0,0,0)$ is not smooth. Which means the projective space \mbox{Proj$k[\D]$} is not smooth.

This above example motivates us to restrict our lattice further to what we will call a square lattice.

\begin{defn} A tree lattice $\L$ will be called a square lattice if the graph of the Hasse diagram of its poset of join-irreducibles has the following property. Degree of all the vertices except for the root is at the most two.
\end{defn}

From the above definition we will derive the following properties of the square lattices. Observe that for a square lattice $\L$ the poset of join irreducibles is union of a collection of chains almost disjoint.
\begin{lem}
The join irreducible poset $J$ of a square lattice $\L$ is union of a collection of chains, disjoint except at the root.
\end{lem}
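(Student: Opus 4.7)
The plan is to unpack the structural consequences of the two uniqueness statements already available. Let $\rho$ denote the root of $J = J(\L)$, and let $\a_1, \dots, \a_k$ be the join irreducibles covering $\rho$ in $J$. By Lemma \ref{unique} every non-root $\b \in J$ covers a unique element of $J$; in Hasse-diagram terms, every non-root vertex has exactly one downward edge. The square-lattice hypothesis says every non-root vertex has total degree at most $2$ in the Hasse diagram, so each such vertex is covered in $J$ by at most one element as well.

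For each $i$, I would define $C_i$ by starting at $\a_i$ and iteratively following the unique upward cover (when it exists) until the process terminates, then set $C_i^{+} = \{\rho\} \cup C_i$. Since consecutive elements form covering pairs and the sequence is strictly increasing, $C_i^{+}$ is a chain in $J$. To see that $J = \bigcup_i C_i^{+}$, take any $\mu \in J \setminus \{\rho\}$ and repeatedly apply Lemma \ref{unique} to obtain a strictly decreasing sequence $\mu = \mu_0 \gtrdot \mu_1 \gtrdot \dots \gtrdot \mu_r$ in $J$; this sequence is forced to terminate at some child $\mu_r = \a_i$ of $\rho$. Reversing the walk and using the uniqueness of upward covers at each step identifies $\mu$ with the element sitting at height $r$ above $\a_i$ in $C_i$, so $\mu \in C_i^{+}$.

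For pairwise intersection, suppose $\mu \in C_i^{+} \cap C_j^{+}$ with $\mu \neq \rho$. Lemma \ref{unique} produces a single downward sequence of unique covers from $\mu$ to $\rho$, and membership of $\mu$ in $C_i^{+}$ (respectively $C_j^{+}$) forces this sequence to pass through $\a_i$ (respectively $\a_j$). Uniqueness then gives $\a_i = \a_j$ and hence $i = j$, so $C_i^{+} \cap C_j^{+} = \{\rho\}$ for $i \neq j$, finishing the proof.

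I do not anticipate a significant obstacle here. The only point requiring care is keeping the two directions of uniqueness straight: Lemma \ref{unique} provides uniqueness going down, while the degree bound of the square-lattice definition provides uniqueness going up. Both are essential, since without the degree bound each $C_i$ could branch upward, and without Lemma \ref{unique} chains emerging from different $\a_i$'s might merge higher up.
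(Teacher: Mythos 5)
Your proof is correct and uses the same two ingredients as the paper's: the unique downward cover from Lemma \ref{unique} and the degree-at-most-two bound giving a unique upward cover. The only (cosmetic) difference is direction — the paper grows each chain downward from a maximal join irreducible, while you grow it upward from the elements covering the root — and your version is if anything slightly more careful about why the chains exhaust $J$ and meet only at $\rho$.
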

\begin{proof}
If $\a \in J$ and $\a$ is not the root say $\rho$ then it has degree at most two in the Hasse graph. Which means if $\a \in J$ is a maximal element and if it is different from the root then it has a unique predecessor $\a_1$. And the element $\a_1$ has one predecessor or it is the root. So continuing the argument and observing that we are dealing with finite lattices we get a chain $\{ \a,\a_1,\a_2 , \ldots , \a_t = \rho\}$ call this chain $c(t+1)_\a$. Next let us pick another maximal element $\b \in J$, if there are none then we have successfully written the join irreducible set as a union of chains, else with the maximal element $\b$ we can associate another chain $c(t_1)$ and this chain is disjoint with the chain $c(t)$ except at $\rho$. In other words $c(t) \cap c(t_1) =\{\rho \}$. Continuing the process we will get the desired result.
\end{proof}

\begin{thm} \label{square} There are natural numbers $n_1,n_2,\ldots , n_t$ such that the square lattice $\L$ is isomorphic to the product of chain lattices $c(n_1),c(n_2), \ldots, c(n_t)$.
\end{thm}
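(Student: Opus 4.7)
The plan is to combine Birkhoff's theorem with the chain-decomposition of $J = J(\L)$ provided by the preceding lemma, and construct an explicit coordinate-wise lattice isomorphism. By that lemma, $J$ decomposes as $c^{(1)} \cup \ldots \cup c^{(t)}$, where each $c^{(i)}$ is a maximal chain meeting the other chains only at the common root $\rho$. Write $c^{(i)} = \{\rho = \a_0^{(i)} < \a_1^{(i)} < \ldots < \a_{n_i-1}^{(i)}\}$, so $|c^{(i)}| = n_i$; these will be the $n_i$ appearing in the statement.

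Define $\Phi : \L \to c(n_1) \times \cdots \times c(n_t)$ by $\Phi(\a) = (k_1(\a), \ldots, k_t(\a))$, where $k_i(\a)$ is the largest index for which $\a_{k_i(\a)}^{(i)} \leq \a$ holds in $\L$. The map is well-defined because $\rho \leq \a$ for every $\a \in \L$, so the set of admissible indices is nonempty, and because the Birkhoff ideal $I_\a = \{\b \in J : \b \leq \a\}$ meets the totally ordered chain $c^{(i)}$ in a downward-closed subset, that is, a prefix $\{\a_0^{(i)}, \ldots, \a_{k_i(\a)}^{(i)}\}$. Order-preservation of $\Phi$ is immediate: $\a \leq \b \Leftrightarrow I_\a \subseteq I_\b \Leftrightarrow I_\a \cap c^{(i)} \subseteq I_\b \cap c^{(i)}$ for every $i$ $\Leftrightarrow k_i(\a) \leq k_i(\b)$ for every $i$, which is the product order on $c(n_1) \times \cdots \times c(n_t)$.

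Injectivity of $\Phi$ follows directly from Birkhoff's theorem, since $\Phi(\a) = \Phi(\b)$ forces $I_\a = I_\b$ and hence $\a = \b$. For surjectivity, given any tuple $(k_1, \ldots, k_t)$, I would form $S = \bigcup_i \{\a_0^{(i)}, \a_1^{(i)}, \ldots, \a_{k_i}^{(i)}\}$; this set is hereditary in $J$ because each factor is a prefix of its own totally ordered chain and the chains intersect only in $\rho$, which lies in every prefix (so there can be no cross-chain descent forcing additional elements). By Birkhoff, $S$ corresponds to some $\a \in \L$, and by construction $\Phi(\a) = (k_1, \ldots, k_t)$. Since a finite order-isomorphism between lattices is automatically a lattice isomorphism, this completes the proof, yielding $\L \cong c(n_1) \times \cdots \times c(n_t)$.

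The only point demanding care is the bookkeeping that identifies hereditary subsets of $J$ with tuples of chain-prefixes; this is precisely where the hypothesis that the chains meet only at the root $\rho$ is essential, because otherwise a prefix in one component might fail to remain downward-closed once viewed inside $J$. With that structural hypothesis from the preceding lemma in hand, the product decomposition follows essentially by inspection.
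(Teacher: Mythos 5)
Your proof is correct and follows essentially the same route as the paper: both decompose $J(\L)$ into chains meeting only at the root, send $\a$ to the tuple recording the maximal element (equivalently, the prefix length) of $I_\a \cap c^{(i)}$ in each chain, and verify bijectivity via Birkhoff's correspondence by reconstructing the hereditary set from a given tuple of prefixes. Your write-up is somewhat more explicit about well-definedness and about why an order isomorphism of finite lattices is automatically a lattice isomorphism, but the underlying argument is the same.
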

\begin{proof}
 Since we have a square lattice the poset of join-irreducibles $J$ can be written as union of chains $c(n_1),c(n_2), \ldots , c(n_t)$. We will look at the lattice $\L$ as the lattice of hereditary sets of $J$. And let us give a map as: \[ \phi : \mathcal{L} \longrightarrow \prod{} c(n_i) \] as \[ \phi ( I_\a)= (\b_1,\b_2,\b_3,\ldots , \b_t )\]

 Where $\b_i$ is the maximal element of the chain $ I_\a \cap c(n_i)$. Let us see that this map is surjective. If we choose a collection of elements $\b_1, b_2, \ldots \b_t$ we look at the hereditary set $I=\{ z \in \mathcal{L} | \exists \, i \, \mathrm{such \, that } \, z \leq  \beta_i \}$ surely by construction the elements $\b_i$ , $i \leq t$ are the maximal elements of the set $I \cap c(n_i)$. Hence the map defined as above is surjective.

 Now for the injectivity if we have two elements $\a,\g \in \L$ such that $\phi(\a)=\phi(\g)=(\b_1,\b_2, \ldots \b_t)$ then $\b_i$ is the maximal element of both $I_\a \cap c(n_i)$ and $I_\g \cap c(n_i)$ but since $I_\a$ and $I_\g$ are ideals we have $I_\a \cap c(n_i) =I\g \cap c(n_i) \, \forall i$ or equivalently $I_\a=I_\g$ or $\a=\g$.
\end{proof}

%%%%%%%%%%%%%%%%%%%%%%%%%%%%%%%%%%%%%%%%%%%%%%%%%%%%%%%%%%%%%%%%%%%%%%%%%%%%%%%%%%%%%%%%%%%%%%%%%%%%%%%%%%%%%%%%%%%%%%%%%%%%%%%%%%%%%%%%%%%%%%

%%%%%%%%%%%%%%%%%%%%%%%%%%%%%%%%%%%%%%%%%%%%%%%%%%%%%%%%%%%%%%%%%%%%%%%%%%%%%%%%%%%%%%%%%%%%%%%%%%%%%%%%%%%%%%%%%%%%%%%%%%%%%%%%%%%%%%%%%%%%%%

In this section we prove a general theorem about singularity of lattice toric varieties. First let us formulate the notion of a diamond and a diamond relation precisely.
\begin{defn}\label{diamond}
A diamond in a distributive lattice $\mathcal{L}$ is $D=\{\a,\b,\gamma,\d\}$ such that there are two non comparable elements $x,y \in D$ and $x \vee y , x \w y \in D$.
\end{defn}
Note that every diamond $D=\{\a, \b ,\a \vee \b , \a \wedge \b \}$ gives rise to a generator in the ideal $I(\L)$ namely the relation ( henceforth will be called a diamond relation ) $f_{D}=x_{\a}x_{\b}-x_{\a \vee \b }x_{\a \wedge \b}$. So we can write the ideal $I(\L)$ as the ideal in $k[\L]$ generated by the relations $f_{D}$ where $D$ ranges over all the diamonds of the distributive lattice $\L$. Let us also call the set of all the diamond relations of the lattice $\L$ as $\mathfrak{D}$ Given $\a \in \L$ let us define a class of diamond relations that contains $\a$.
\begin{defn}
For $\a \in \L$ , $E_{\a}=\{(\a,\g)| \exists D \in \mathfrak{D} | \a, \g \in D\}$
\end{defn}

\begin{lem}\label{inequality} For a square lattice $\L$ and $\b$ a maximal join-irreducible in $J(\L)$ we have $$ \card{E_\a \setminus E_\a(\L_\b)}= \card{E_\alpha} -\card{E_\alpha(\L_\b)} \geq \card{\L} - \card{\L_{\b}} - 1 $$
\end{lem}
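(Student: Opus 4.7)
The plan is to produce, for every element $\g \in B_\b \setminus \{\a\}$ where $B_\b := \L \setminus \L_\b$, an explicit diamond in $\L$ containing both $\a$ and $\g$. Because $\g \in B_\b$, any such diamond uses an element outside $\L_\b$ and therefore is not a diamond in $\L_\b$; consequently each such $\g$ contributes a distinct pair $(\a,\g)$ to $E_\a \setminus E_\a(\L_\b)$. By Lemma \ref{chain} this supplies at least $\card{B_\b}-1 = \card{\L}-\card{\L_\b}-1$ pairs, giving the desired inequality. The cardinality identity $\card{E_\a \setminus E_\a(\L_\b)} = \card{E_\a}-\card{E_\a(\L_\b)}$ reduces to $E_\a(\L_\b) \subseteq E_\a$, which in turn holds because $\L_\b$ is a sublattice of $\L$: its elements, viewed as hereditary subsets of $J$, avoid $\b$, a property stable under union and intersection, so every diamond of $\L_\b$ is also a diamond of $\L$.

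To construct the diamonds I coordinatize $\L \cong \prod_{c=1}^t c(n_c)$ via Theorem \ref{square} and write $\a=(a_1,\dots,a_t)$. Since $\b$ is a maximal join-irreducible, it sits at the top of one of the $J$-chains, say the $i^*$-th, so $\b = (0,\dots,0,n_{i^*}-1,0,\dots,0)$ and $B_\b = \{\g \in \L : \g_{i^*}=n_{i^*}-1\}$. Fix $\g \in B_\b$ with $\g \neq \a$ and split by comparability. If $\a$ and $\g$ are incomparable, then $\{\a,\g,\a\v\g,\a\w\g\}$ is a diamond containing both. If $\a < \g$ (the case $\a > \g$ being symmetric), set $D=\{c : a_c<\g_c\}$; when $\card{D}\geq 2$, I pick distinct $c_1,c_2 \in D$ and raise $\a$ along each chosen coordinate to obtain incomparable $x,y \in [\a,\g]$ with $x \v y=\g$ and $x \w y=\a$, hence the diamond $\{x,y,\a,\g\}$.

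The remaining and most delicate case is $\a<\g$ with $D=\{c_0\}$ a singleton. Here I produce either a companion $\b'$ incomparable with $\a$ satisfying $\a \v \b'=\g$ --- which exists exactly when some $c \neq c_0$ has $a_c>0$; or a companion $y$ incomparable with $\g$ satisfying $\g \w y=\a$ --- which exists exactly when some $c \neq c_0$ has $\g_c<n_c-1$. If $c_0 \neq i^*$, then $a_{i^*} = \g_{i^*} = n_{i^*}-1 > 0$ supplies the first companion immediately. If $c_0=i^*$, then $\a$ and $\g$ agree on every coordinate $c \neq i^*$, and the failure of both options forces $n_c=1$ for each such $c$; but then $\L$ is a chain, admits no diamonds, so $E_\a = \emptyset$ and the inequality becomes the trivial $0 \geq 0$. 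Assembling the cases, every $\g \in B_\b \setminus \{\a\}$ contributes a pair to $E_\a \setminus E_\a(\L_\b)$, yielding the claimed bound. The main obstacle is this singleton case, where one must exploit the special role of the coordinate $i^*$ to verify that a suitable companion element really exists inside $\L$.
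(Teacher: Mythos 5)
Your proposal is correct and follows the same basic strategy as the paper: the paper likewise reduces the bound to exhibiting, for (almost) every element of $B_\b=\L\setminus\L_\b$, a distinct diamond through $\a$, using the coordinatization $\L\cong\prod c(n_i)$ of Theorem \ref{square}; that construction is the content of Lemma \ref{greater}, which the paper's proof of Lemma \ref{inequality} invokes after a structural analysis of the diamonds in $E_\a\setminus E_\a(\L_\b)$. The differences are organizational. The paper cases on how $b_1\in B_\b$ compares with $b=\min\{\g\in B_\b\mid\g\ge\a\}$ and simply discards $b$ itself, settling for $\card{B_\b}-1$ diamonds; you case on how $\g$ compares with $\a$ and on the set of coordinates where they differ, and your singleton case shows that even $b=\a\v\b$ lies on a diamond through $\a$ unless $\L$ is a chain, so you in fact produce $\card{B_\b}$ pairs --- marginally stronger than needed. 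Two small points to tighten: (i) since $\a\in\L_\b$ (the setting in which the lemma is applied in Theorem \ref{b}), $\a\notin B_\b$, so ``$B_\b\setminus\{\a\}$'' is just $B_\b$, the subcase $\a>\g$ is vacuous, and in your singleton case one always has $c_0=i^*$, because $\a\in\L_\b$ and $\g\in B_\b$ necessarily differ in the $i^*$ coordinate (your subcase $c_0\neq i^*$ never occurs); (ii) in the case $\card{D}\ge 2$, raising $\a$ at the two single coordinates $c_1,c_2$ gives $x\w y=\a$ but yields $x\v y=\g$ only when $\card{D}=2$; for larger $D$ you should split $D$ into two nonempty blocks and raise $\a$ to $\g$ on each block, or fall back on one of your companion constructions. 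Neither issue affects the validity of the argument or of the bound.
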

\begin{proof}
Let $n=\card{\L}-\card{\L_\b}$, and let $\{b_1,b_2,\ldots , b_n\}=\L \setminus \L_\b$, further let us assume that $\{D_1,D_2, \ldots , D_r\}=E_\a \setminus E_\a(\L_\b)$. Let us call $D_i=\{\a, x_i,y_i,z_i\}$. Since these diamond relations are not in $E_\a(\L_\b)$ which means at least one of $x_i,y_i,z_i \in \L \setminus \L_\b$ for each $i \leq r$ without loss of generality let us assume that it is $x_i$. Now we know from \ref{chain} that $\L \setminus \L_\b$ is given by $\{\g \in \L | \g \geq \b\}$. And since $x_i \in \L \setminus \L_\b$ we have $x_i \geq \b$ which leads us to the following two cases:
\begin{itemize}
\item \emph{\underline{case one}} $x_i$ is the maximal element of the diamond $D_i$. In this case since $x_i$ is the maximal element it is join of two other elements $\d,\e \in D_i$ which means at least one of $\d,\e \geq \b$ see \ref{check}. But it cannot be both since that will imply $\a \geq \b$ which is contrary to our assumption that $\a \in \L_\b$. So we have exactly two elements of $D_i$ in $\L \setminus \L_\b$.
\item \emph{\underline{case two}} $x_i$ is not the maximal element. In this case we have another element $\d \in D_i$ such that $\d \geq x_i \geq \b$. But only these two elements are larger than $\b$ since otherwise we will have $\a > \b$ and we contradict the assumption about $\a \in \L \setminus \L_\b$.
\end{itemize}

 To sum up the above two cases we can say that we have one of $y_i,z_i \in \L \setminus \L_\b$ apart from $x_i$ which is by our assumption is in the chain. Or one can say that there exist $b_{i_1} , b_{i_2}$ such that $x_i =b_{i_1}$ and either of $y_i,z_i$, without loss of generality let us assume $y_i = b_{i_2}$. Let us rewrite $D_i$ in light of this new information. $D_i=\{\a, b_{i_1},b_{i_2},z_i\}$. Let us also see that $b_{i_1}$ $ b_{i_2}$ are comparable since otherwise if these are the non-comparable elements in the diamond $D_i$ then both being larger than $\b$ it will imply both $b_{i_1} \v b_{i_2}$ and $b_{i_1} \w b_{i_2}$ larger than $\b$ but since one of these two must be $\a$ which is not larger than $\b$ we lead to a contradiction to our assumption that the elements $b_{i_1}$ and $ b_{i_2}$ are non-comparable. So without loss of generality let us assume that $b_{i_1} > b_{i_2}$. Note that with these assumptions in place we see that $b_{i_1}$ is the maximal element of the diamond $D_i$. So the Hasse diagram of the diamond looks like either of the following two:
 \begin{center}
 %\begin{minipage}{.2\textwidth}
\begin{tikzpicture}
  \node (max) at (1,2) {$b_{i_1}$};
  \node (a) at (2,1) {$b_{i_2}$};
  \node (b) at (0,1) {$z_i$};
  \node (c) at (1,0) {$\a$};

  \draw (a) -- (max) -- (b) -- (c) -- (a) ;
  %\draw[preaction={draw=white, -,line width=6pt}] (a) -- (e) -- (c);
%\end{tikzpicture}

%\end{minipage}%
%\begin{minipage}{.2\textwidth}
%\begin{tikzpicture}
  \node (max) at (8,2) {$b_{i_1}$};
  \node (a) at (9,1) {$b_{i_2}$};
  \node (b) at (7,1) {$\a$};
  \node (c) at (8,0) {$z_i$};

  \draw (a) -- (max) -- (b) -- (c) -- (a) ;
  %\draw[preaction={draw=white, -,line width=6pt}] (a) -- (e) -- (c);
\end{tikzpicture}
%\end{minipage}%

\end{center}
Now by \ref{greater} we have $\card{E_\a}-\card{E_\a(\L_\b)} \geq \card{\L}-\card{\L_\b}-1=r-1$.

\end{proof}

For a square lattice $\mathcal{L} = \prod_{i \leq r } C(n_i)$, we know that the set of join irreducibles can be identified with the poset $\cup C(n_i)$, let us take a maximal join irreducible $[\beta]$ in $\L$ , given by $[\beta]=(\rho,\rho,\rho,\ldots, \b,\rho, \ldots, \rho)$ where $\b$ is the maximal element of $C(n_i)$ and $\rho$ is the minimal element of $\L$. Note that since the join irreducibles of $\L$ is identified with the union of $C(n_j)$ we will call this join irreducible $[\beta]$ by $\b$ that way we will reduce the burden of notation without hampering the generality of the treatment. Let us prove the lemma below with these notations in mind.

\begin{lem} \label{greater} For $\L$, $\b$, $C(n_j)$, $B_\b$ as above let $\a \in \L_\b$ then $\card{E_\a} - \card{E_\a(\L_\b)} \geq \card{B_\b} -1$

\end{lem}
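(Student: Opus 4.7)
The plan is to exhibit, for each element $b \in B_\b$ other than one designated exceptional element, an explicit diamond in $\L$ containing both $\a$ and $b$. Since $b \in B_\b = \L \setminus \L_\b$ lies outside $\L_\b$, the pair $(\a, b)$ belongs to $E_\a$ but not to $E_\a(\L_\b)$; as distinct $b$'s yield distinct pairs, this at once gives $\card{E_\a} - \card{E_\a(\L_\b)} \geq \card{B_\b} - 1$.

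First I would invoke Theorem \ref{square} to identify $\L$ with $\prod_{i=1}^{t} c(n_i)$ and work in product coordinates, writing $\a = (a_1, \ldots, a_t)$ and $b = (b_1, \ldots, b_t)$. With $\b$ corresponding to the top element of the chain $c(n_j)$, Lemma \ref{chain} gives $B_\b = \{b \in \L : b_j = \b\}$, and the hypothesis $\a \in \L_\b$ translates to $a_j < \b$. I would single out the exceptional element $b^{\ast} := (a_1, \ldots, a_{j-1}, \b, a_{j+1}, \ldots, a_t) \in B_\b$ and attempt the diamond construction only for $b \in B_\b \setminus \{b^{\ast}\}$.

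For such a $b$, I split into two cases. If $\a$ and $b$ are non-comparable, then $\{\a, b, \a \vee b, \a \wedge b\}$ is a diamond containing $\a$ and $b$, the non-comparability forcing the four entries to be distinct. Otherwise $\a$ and $b$ are comparable; the direction $b < \a$ is excluded by $a_j < \b = b_j$, leaving $\a < b$. Because $b \neq b^{\ast}$, there exists some $i^{\ast} \neq j$ with $a_{i^{\ast}} < b_{i^{\ast}}$, and I form the swapped elements
\[
x = (b_1, \ldots, b_{i^{\ast}-1}, a_{i^{\ast}}, b_{i^{\ast}+1}, \ldots, b_t), \quad y = (a_1, \ldots, a_{i^{\ast}-1}, b_{i^{\ast}}, a_{i^{\ast}+1}, \ldots, a_t).
\]
A coordinate-wise verification shows $x \vee y = b$ and $x \wedge y = \a$, while $x_{i^{\ast}} < y_{i^{\ast}}$ together with $x_j > y_j$ exhibits $x$ and $y$ as non-comparable; hence $\{\a, x, y, b\}$ is the required diamond.

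The delicate step, rather than a deep obstacle, is ensuring that the split construction in the second case truly produces non-comparable $x, y$ whose join and meet recover exactly $b$ and $\a$. The role of excluding $b^{\ast}$ is precisely to guarantee the second index $i^{\ast} \neq j$ on which $\a$ and $b$ disagree; without such an index, $\a$ and $b$ would differ only in coordinate $j$, the naive swap would collapse, and any candidate pair $x, y$ would be forced to be comparable along the chain $c(n_j)$. The remaining checks — distinctness of the four diamond entries and the observation that each produced pair $(\a, b)$ is genuinely absent from $E_\a(\L_\b)$ — are immediate from the product coordinate description.
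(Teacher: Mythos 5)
Your proposal is correct and follows essentially the same route as the paper: pass to product coordinates, single out the exceptional element $b^{\ast}=(a_1,\ldots,\b,\ldots,a_t)$, and for every other element of $B_\b$ exhibit a diamond through $\a$ and that element, counting the resulting distinct pairs in $E_\a\setminus E_\a(\L_\b)$. The only cosmetic difference is in the comparable case, where the paper uses $b^{\ast}$ itself as one of the two middle vertices while you swap at an arbitrary index $i^{\ast}\neq j$ where $\a$ and $b$ disagree; both constructions verify identically.
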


\begin{proof}
Let us write $\a$ as the tuple $(a_1,a_2,a_3, \ldots a_r)$ and without loss of generality let us assume that $[\beta]=(\b,\rho,\rho \ldots , \rho)$. Let us call $b = min \{ \g \in B_\b | \g \geq \a \}$. So we can write $b$ even more explicitly with these information in place as $b= ( \b , a_1,a_2 ,\ldots , a_r)$. Given $\b_1=(\b,\g_2,\g_3,\ldots,\g_r) \in B_\b$ and $b_1 \neq b$ (continuing with the same notations as in the previous lemma ) we will show that there is a diamond $D_{\b_1}=\{\a,b_1,\b_2,z\}$ such that $D_{\b_1}$ and $D_{{b_1}'}$ are different for different elements $b_1$ and ${b_1}'$.  We will prove the above by showing the following cases:
\begin{itemize}
\item \emph{\underline{Case one}} $b_1 >b$ : in this case note that $\g_i \geq a_i \, , \forall i \geq 2$. Let us take $z=(a_1,\g_2,\g_3,\ldots , \g_r)$. We have $z \v b = b_1 $ and $z \w b = \a$
\begin{center}

\begin{tikzpicture}
  \node (max) at (1,2) {$b_1$};
  \node (a) at (0,1) {$z$};
  \node (b) at (2,1) {$b$};
  \node (c) at (1,0) {$\a$};

  \draw (a) -- (max) -- (b) -- (c) -- (a) ;
  %\draw[preaction={draw=white, -,line width=6pt}] (a) -- (e) -- (c);
\end{tikzpicture}
\end{center}
\item \emph{\underline{Case two}} $b_1 < b$: in this case we have $\g_i \leq a_i \, , \forall i \geq 2$. Note that $\a$ and $\b_1$ are non-comparable since otherwise if $b_1 >\a$ that contradicts the assumption about the minimality of $b$ and if $b_1 < \a $ then $\a \in B_\b$ contradicting our assumption about $\a$. And since these are non-comparable we take $z= \a \w \b_1$. Note that we have $b_1 \v \a = b$ also.
\begin{center}

\begin{tikzpicture}
  \node (max) at (1,2) {$b$};
  \node (a) at (0,1) {$\a$};
  \node (b) at (2,1) {$b_1$};
  \node (c) at (1,0) {$z$};

  \draw (a) -- (max) -- (b) -- (c) -- (a) ;
  %\draw[preaction={draw=white, -,line width=6pt}] (a) -- (e) -- (c);
\end{tikzpicture}
\end{center}

\item \emph{\underline{Case three}} $b_1$ is non-comparable to $b$ : in this case we see that $b_1$ is non-comparable to $\a$ since otherwise if $b_1 > \a$ then $b > b_1 \w b > \a$ and $b$ strictly larger than $b_1 \w b$ so it contradicts the minimality of $\a$.  And if $b_1 < \a$ then it contradicts the definition of $\a$. So $b_1 $ and $\a$ are non-comparable so let us take $z = b_1 \w \a$ and $b_2 = b_1 \v \a$.

    \begin{center}

\begin{tikzpicture}
  \node (max) at (1,2) {$b_2$};
  \node (a) at (0,1) {$\a$};
  \node (b) at (2,1) {$b_1$};
  \node (c) at (1,0) {$z$};

  \draw (a) -- (max) -- (b) -- (c) -- (a) ;
  %\draw[preaction={draw=white, -,line width=6pt}] (a) -- (e) -- (c);
\end{tikzpicture}
\end{center}
\end{itemize}
%We will now prove that in the above cases the diamonds we have got are all different. Only the diamonds coming from case two and case three could possibly be same, but since in the case three $b$ and $b_1$ are non-comparable so $b_2$ cannot be equal to $b$ which is the situation according to the case one.

So the diamonds that we have specified to every element in $B_\b \setminus \{b\}$ are all distinct since given $b_1$ we have the tuple $(\a,b_1) \in E_\a$, which means that the set $E_\a \setminus E_\a(\L_\b)$ has cardinality more than or equal to $\card{B_\b}-1$. This completes the proof of the lemma.
\end{proof}
%%%%%%%%%%%%%%%%%%%%%%%%%%%%%%%%%%%%%%%%%%%%%%%%%%%%%%%%%%%%%%%%%%%%%%%%%%%%%%%%%%%%%%%%%%%%%%%%%%%%%%%%%%%%%%%%%%%%%%%%%%%%%%%%%%%%%%%%%%%%%%%%%%%

%%%%%%%%%%%%%%%%%%%%%%%%%%%%%%%%%%%%%%%%%%%%%%%%%%%%%%%%%%%%%%%%%%%%%%%%%%%%%%%%%%%%%%%%%%%%%%%%%%%%%%%%%%%%%%%%%%%%%%%%%%%%%%%%%%%%%%%%%%%%%%%%%
\section{Proof of the Main Results} \label{main}

Now we are ready to prove the main theorems of the article.

\subsection{ Proof of The Theorem \ref{a}}
%\begin{thm} Let $\L$ be a distributive lattice and $\a \in \L$. $Y(\mathcal{L})=V(I(\mathcal{L})) \subset \mathbb{A}^{|\L|}$ and let point $p_{\a}$ defined as \[ (p_{\a})_{\b} = \left\{ \begin{array}{ll}
%        0 & \mbox{if $\b \neq \a $};\\
%       c \neq 0 & \mbox{otherwise}.\end{array} \right. \] Where $(p_{\a})_{\b}$ denotes the $\b$th coordinate of the point $p_{\a}$ is smooth if \[|E_{\a}| \geq |\mathcal{L}| -|J(\mathcal{L})|\]
%\end{thm}

\begin{proof}
Let us write down the set of diamonds in the lattice $\D$ as $D_1,D_2 , \ldots, D_m$ of which let us choose $D_1, D_2, \ldots, D_r$ diamonds containing the tuples $(\a_i,\g_i) \in E_\a$, as there can be more than one diamonds containing the same tuple. Let us also enumerate the diamond relations that generate the ideal $I(\L)$ as $f_1,f_2, \ldots, f_m$ where $f_i = f_{D_i}$. So for each $f_i$, $i \leq r$ we have a $\b_i \in \L$ such that $f_i= x_{\a}x_{\b_i} - x_{\d_i}x_{\gamma_i}$ for some $\d_i , \gamma_i \in \L$.
So we have $\frac{\partial f_i }{\partial x_{\b_i}}|_{p_\a}=x_\a|_{p_\a}=c \neq 0 , \, \forall i \leq r $ and $\frac{\partial f_i}{\partial x_\b}=0 , \, \forall i > r$. The last equality is derived from the fact that $x_\a$ is the only nonzero coordinate and it only occurs in the diamond relations $f_i$ for $i \leq r$. So the Jacobian matrix \cite[p.~31]{Ha}\cite[p.~404]{eis} has the following shape.

\[ \left( \begin{array}{c}
\frac{\partial f_i}{\partial x_{\beta_j}}
 \end{array} \right)= \left(
\begin{array}{cc}
cI_{r \times r} & 0 \\
A & B
\end{array}\right)\], for some appropriate size matrices $A$ and $B$. So the rank of the Jacobian matrix is greater than or equal to $r=|E_\a|$ and since the dimension of the variety $X$ at the point $p_\a$ is $|J(\L)|$ we have the result \cite[p.~32]{Ha}.

\end{proof}

\subsection{Proof of the Theorem \ref{b}}
%\begin{thm}\label{main lemma 1}
%For $\a \in \L$ where $L$ is a square lattice we have $|E_\a| \geq |\mathcal{L}| - |J| $
%\end{thm}

\begin{proof}
To prove the statement we will make use of the pruning operation.

We will prove the lemma by induction on $|J|$. For the base case $|J|=1$ the lattice $\L$ is just a chain. Hence $E_\a=\emptyset $ and since $J=\L$ in this case we have the result that we seek to prove. For the general case we will break it into two cases.
\begin{itemize}
\item \emph{\underline{Case One}}
 Since $\a$ is not the maximal element of the lattice $\L$ we can always find a join irreducible $\b$ such that $\b \nless \a$. We prune the lattice $\L$ with respect to the maximal join-irreducible $\beta$ and let the pruned lattice be called $\L_\b$ and let us call the join irreducibles of this sublattice be $J_\b$.
 By induction hypothesis we know that \[ \card{E_\alpha(\L_\b)} \geq \card{\L_\b} - \card{ J_\b} \]
Now we also have \[ \card{J_b} = \card{ J} - 1 \] Putting these two equations together we have \[\card{E_\a (\L_{\b})} \geq \card{\L_{\b}} - \card{ J} + 1 \]

Note that if we prove the following \[ \card{E_\alpha} -\card{E_\a(\L_{\b})} = \card{\L} - \card{\L_{\b}} - 1 \] then we have
\[ \card{E_\a}-\card{E_\a(\L_\b)} \geq \card{\L} - \card{L_\b}-1\]
\[ \Leftrightarrow \card{E_\a} \geq \card{\L}-\card{\L_\b} + \card{E_\a(\L_\b)} -1 \]
\[ \geq \card{\L} - \card{\L_\b} + \card{\L_\b} - \card{J} +1 -1 \]
\[ \geq \card{\L} - \card{J}\]

If we prove the above inequality we will have proved this particular case of the lemma. We conclude the above inequality in the following lemmas \ref{inequality}

\item \emph{\underline{Case Two}} In this case we have $\a = \mathrm{max}(\L)$. We reduce this case to the previous case by replacing the lattice $(\L, \leq)$ with the lattice $(\L , \triangleleft)$ where the order $\triangleleft$ is given by the following rule: $x \triangleleft y \Leftrightarrow y \leq x $. Observe that a diamond in the lattice $\L,\leq$ is still a diamond in $(\L, \triangleleft)$ and vice versa. And also observe that the set $E_\a$ remains same for both the lattices for a given element $\a \in \L$. But since we have reversed the order the maximal element $\a$ is now the minimal element of the lattice $(\L, \triangleleft)$. Hence by the previous case we have the required inequality.

\end{itemize}
\end{proof}

\subsection{Proof of the Theorem  \ref{c}}

%\begin{thm}\label{main result}
%For a square lattice $\mathcal{L}$, $X(\mathcal{L})$ is non singular at all its points
%\end{thm}
\begin{proof}
We will prove that the affine cone over the variety $X(\L)$, namely $\widehat{X(\mathcal{L})} =\mathrm{Spec}( k[\L])$ is smooth at all points except at the vertex. Hence the projective variety $X(\L)$= $\mathrm{Proj} ( k[\L])$ is smooth at all points. Let $p \in \widehat{X(\L)}$ which is not the origin, so we have at the least one $\a \in \L$ such that the \mbox{$\a$th} coordinate of $p$ namely $(p)_\a = x_\a$ is nonzero. Now by theorem \ref{a} we know that the point $p$ is smooth if $\card{E_\a} \geq \card{\L} - \card{J}$ which is true for any $\a$ for a square lattice $\L$ by theorem \ref{b}.

\end{proof}

%%%%%%%%%%%%%%%%%%%%%%%%%%%%%%%%%%%%%%%%%%%%%%%%%%%%%%%%%%%%%%%%%%%%%%%%%%%%%%%%%%%%%%%%%%%%%%%%%%%%%%%%%%%%%%%%%%%%%%%%%%%%%%%%%%%%%%%%%%%%%%%%%%%%%%%

%%%%%%%%%%%%%%%%%%%%%%%%%%%%%%%%%%%%%%%%%%%%%%%%%%%%%%%%%%%%%%%%%%%%%%%%%%%%%%%%%%%%%%%%%%%%%%%%%%%%%%%%%%%%%%%%%%%%%%%%%%%%%%%%%%%%%%%%%%%%%%%%%%%%%%
\section{References}

\bibliographystyle{abbrv}
\bibliography{main}

\end{document}